\documentclass{article}
\usepackage[utf8]{inputenc}
\usepackage[T1]{fontenc}

\usepackage[english]{babel}

\usepackage[letterpaper,top=2cm,bottom=2cm,left=3cm,right=3cm,marginparwidth=1.75cm]{geometry}

\usepackage{amsmath}
\usepackage{amssymb}
\usepackage{amsthm}
\usepackage{graphicx}
\usepackage[colorlinks=true, allcolors=blue]{hyperref}
\usepackage{xfrac}
\usepackage{tikz}
\usetikzlibrary{arrows.meta}
\usetikzlibrary{math}
\usetikzlibrary{positioning}
\usepackage{array}
\usepackage{pgfplots}
\pgfplotsset{compat=1.18}

\usepackage{makecell}
\usepackage{multirow}

\usepackage{thmtools} 

\usepackage{stmaryrd}
\DeclareUnicodeCharacter{03B1}{\ensuremath{\alpha}}
\DeclareUnicodeCharacter{2200}{\ensuremath{\forall}}
\DeclareUnicodeCharacter{2983}{\ensuremath{\llbracket}}
\DeclareUnicodeCharacter{2081}{\ensuremath{_{\!1}}}
\DeclareUnicodeCharacter{2082}{\ensuremath{_{\!2}}}
\DeclareUnicodeCharacter{2984}{\ensuremath{\rrbracket}}
\DeclareUnicodeCharacter{2208}{\ensuremath{\in}}
\DeclareUnicodeCharacter{2227}{\ensuremath{\land}}
\DeclareUnicodeCharacter{2228}{\ensuremath{\lor}}
\DeclareUnicodeCharacter{2124}{\ensuremath{\mathbb{Z}}}
\DeclareUnicodeCharacter{2115}{\ensuremath{\mathbb{N}}}
\DeclareUnicodeCharacter{2260}{\ensuremath{\neq}}
\DeclareUnicodeCharacter{2203}{\ensuremath{\exists}}
\DeclareUnicodeCharacter{2286}{\ensuremath{\subseteq}}

\usepackage{listings}
\definecolor{keywordcolor}{rgb}{0.7,0.1,0.1}
\definecolor{tacticcolor}{rgb}{0.0,0.1,0.6}
\definecolor{commentcolor}{rgb}{0.4,0.4,0.4}
\definecolor{symbolcolor}{rgb}{0.0,0.1,0.6}
\definecolor{sortcolor}{rgb}{0.1,0.5,0.1}
\definecolor{attributecolor}{rgb}{0.7,0.1,0.1}
\lstset{
  inputencoding=utf8,
  extendedchars=true,
  literate=
    {α}{{$\alpha$}}1
    {∀}{{$\forall$}}1
    {⦃}{{$\llbracket$}}1
    {₁}{{$_{\!1}$}}1
    {₂}{{$_{\!2}$}}1
    {⦄}{{$\rrbracket$}}1
    {∈}{{$\in$}}1
    {∧}{{$\land$}}1
    {∨}{{$\lor$}}1
    {ℤ}{{$\mathbb{Z}$}}1
    {ℕ}{{$\mathbb{N}$}}1
    {≠}{{$\neq$}}1
    {∃}{{$\exists$}}1
    {⊆}{{$\subseteq$}}1
}

\lstset{language=lean, basicstyle=\ttfamily\small, keywordstyle=\color{keywordcolor}}

\usepackage{xcolor}
\usepackage{wasysym}
\newcommand\doublecheck{\textcolor{blue}{\checked\kern-0.6em\checked}}
\makeatletter
\newcommand\markerlessfootnote[1]{%
  \begingroup
    \renewcommand\thefootnote{}
    \footnotetext{#1}%
  \endgroup
}
\makeatother

\usepackage{multirow}
\usepackage{booktabs}

\allowdisplaybreaks

\newcommand{\newvtheorem}[2]{\newtheorem{#1}[theorem]{\llap{\textnormal{\doublecheck} }#2}}

\theoremstyle{plain}
\newtheorem{theorem}{Theorem}
\newvtheorem{vtheorem}{Theorem}

\newvtheorem{vlemma}{Lemma}
\newtheorem{claim}[theorem]{Claim}

\newvtheorem{vproposition}{Proposition}
\newtheorem{conjecture}[theorem]{Conjecture}
\newtheorem{problem}[theorem]{Problem}

\newvtheorem{vcorollary}{Corollary}
\newvtheorem{vobservation}{Observation}

\theoremstyle{definition}
\newtheorem{definition}[theorem]{Definition}

\newtheorem{construction}[theorem]{Construction}
\theoremstyle{remark}
\newtheorem*{remark}{Remark}

\newcommand{\Erdos}{Erd\H{o}s}
\newcommand{\Turan}{Tur\'{a}n}
\newcommand{\Kruckeberg}{Kr\"{u}ckeberg}

\newcommand{\bbN}{\mathbb{N}}
\newcommand{\bbZ}{\mathbb{Z}}
\newcommand{\abs}[1]{\lvert#1\rvert}

\title{
Forbidden Sidon subsets of perfect difference sets,\\
featuring a human-assisted proof
}
\author{
Boris Alexeev
\thanks{Please be aware that in other versions of this paper, ChatGPT and Lean are listed as authors.
However, arXiv policy is that ``generative AI language tools should not be listed as an author''.  Nota bene: ChatGPT did not write any of the text of this paper.}
\and
Dustin G.\ Mixon\thanks{Department of Mathematics, The Ohio State University, Columbus, OH} \thanks{Translational Data Analytics Institute, The Ohio State University, Columbus, OH}
}
\date{}

\begin{document}
\maketitle

\begin{abstract}
We resolve a \$1000 \Erdos{} prize problem, complete with formal verification generated by a large language model.

In over a dozen papers, beginning in 1976 and spanning two decades, Paul \Erdos{} repeatedly posed one of his ``favourite'' conjectures: every finite Sidon set can be extended to a finite perfect difference set.
We establish that $\{1,2,4,8,13\}$ is a counterexample to this conjecture.

During the preparation of this paper, we discovered that although this problem was presumed to be open for half a century, Marshall Hall,~Jr.\ published a different counterexample three decades \emph{before} \Erdos{} first posed the problem.
With a healthy skepticism of this apparent oversight, and out of an abundance of caution, we used ChatGPT to vibe code a Lean proof of both Hall's and our counterexamples.
\end{abstract}

\section{Introduction} \label{section:intro}

We note that this paper is written mostly in the style of an ordinary mathematics paper, so we suggest skipping ahead to Section~\ref{section:ai} if the reader is primarily curious about the role of artificial intelligence in this paper, including the sense in which this features a ``human-assisted proof''.\\[0pt]

Paul \Erdos{} wrote many papers on the subject of \emph{Sidon sets/sequences}, also called \emph{$B_2[1]$ (or~$B_2$ for short) sets/sequences}.
We give a definition and an equivalent restatement:
\begin{definition} \label{def:sidon}
A set~$A$ (typically of integers) is a \emph{Sidon set} if all differences $a-a'$ of distinct $a,a'\in A$ are distinct.
\end{definition}
\begin{remark}
Beware that harmonic analysts use the term ``Sidon set'' to mean something entirely different.
\end{remark}
\begin{vobservation} \label{obs:plusminus}
\markerlessfootnote{\hspace{-1ex}\doublecheck{} denotes that the corresponding result has been verified in Lean.}
Equivalently, $A$~is a Sidon set if all sums $a+a'$ with $a,a'\in A$ are distinct, up to re-ordering of the terms $a+a'=a'+a$.
\end{vobservation}
\begin{proof}
The equivalence follows from the fact that $a-b=c-d$ is equivalent to $a+d=b+c$, though note that in the case of differences we ignore all differences $a-a=0$, whereas in the case of sums we \emph{do} consider the sums $a+a$.
For example, $\{1,2,3\}$ is not a Sidon set because of the identity $2-1=3-2$ using differences or the identity $1+3=2+2$ using sums.
\end{proof}

\Erdos{} posed many problems about Sidon sets, especially regarding their sizes, many of which are still open.
For example, \Erdos{} asked how quickly the elements grow in ``the greedy Sidon set'', also known as the ``Mian--Chowla sequence''~\cite{MianChowla} (OEIS sequence A005282~\cite{A005282}):
\begin{problem}
\label{problem:340}
Let $A=\{1,2,4,8,13,21,31,45,66,81,97,\dotsc\}$ be the greedy Sidon sequence, constructed at each step by picking the smallest positive integer that preserves the Sidon property.  What is the order of growth of~$A$?
\end{problem}

\Erdos{} did not believe that the Mian--Chowla sequence was optimally dense.
Instead, he made a specific conjecture about the maximum asymptotic size of a Sidon set of natural numbers:
\begin{conjecture}
\label{conjecture:329}
There exists a Sidon set of natural numbers $A\subset\bbN$ so that
\[ \limsup_{n\to\infty} \frac{\abs{A \cap \{1,\dotsc,n\}}}{\sqrt{n}}=1. \]
\end{conjecture}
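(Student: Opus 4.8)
The statement packages two inequalities of very different character, and I would separate them at once. The upper bound $\limsup \le 1$ requires no cleverness: the \Erdos{}--\Turan{} inequality asserts that every Sidon set contained in $\{1,\dots,n\}$ has at most $\sqrt{n}+\sqrt[4]{n}+1$ elements, so for any infinite Sidon set $A$ we get $\abs{A\cap\{1,\dots,n\}}/\sqrt{n}\le 1+o(1)$ and hence $\limsup \le 1$ automatically. All the content of Conjecture~\ref{conjecture:329} therefore lies in the reverse direction: exhibiting a single infinite Sidon set whose counting function $R(n):=\abs{A\cap\{1,\dots,n\}}$ reaches $(1-o(1))\sqrt{n}$ for infinitely many $n$.

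My plan for the lower bound is to recycle the optimal finite examples — the perfect difference sets of the paper's title. For each prime power $q$, Singer's construction furnishes a perfect (planar) difference set $D_q\subseteq\bbZ/(q^2+q+1)\bbZ$ with $\abs{D_q}=q+1$ in which every nonzero residue is a difference exactly once; lifting $D_q$ to $\{0,1,\dots,q^2+q\}$ yields a Sidon set of $q+1$ integers inside an interval of length $q^2+q+1$, so its local density $(q+1)/\sqrt{q^2+q+1}\to 1$ is asymptotically perfect. The goal is then to splice a sequence of such blocks, for $q=q_1<q_2<\cdots$, into one infinite Sidon set $A$ by dilating and translating the $k$-th block into a window centred near some rapidly growing $N_k$.

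The delicate point — and, I expect, the genuine obstacle — is that this gluing must simultaneously (i) preserve each block's internal difference structure, so that the local density contribution survives, and (ii) space and scale the windows so that no difference equation $a-a'=b-b'$ ever mixes elements from two different blocks, which is what would destroy the global Sidon property. These demands pull in opposite directions: separating the blocks far enough to rule out all cross-block coincidences dilutes the overall density and pushes $\limsup$ strictly below $1$, while packing them tightly enough to keep density near $1$ reintroduces cross-block collisions. \Kruckeberg{}'s classical concatenation manages this tradeoff well enough to reach $\limsup R(n)/\sqrt{n}\ge 1/\sqrt{2}$, and closing the remaining gap from $1/\sqrt{2}$ up to the conjectured $1$ is precisely the hard, still-open core of the statement. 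I do not see a complete route past this: any honest attempt would need a new splicing scheme that wastes asymptotically no room between consecutive blocks yet still forbids every cross-block difference, and I would flag this — rather than either inequality in isolation — as the true difficulty of the problem.
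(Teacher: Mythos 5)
You have not proven the statement, and neither does the paper: Conjecture~\ref{conjecture:329} is stated here as an \emph{open conjecture}, with no proof offered. Your assessment of its structure is accurate --- the upper bound $\limsup\le 1$ is indeed the \Erdos{}--\Turan{} theorem~\cite{ErTu41}, the real content is the lower bound, and \Kruckeberg{}'s splicing of Singer blocks stalls at $1/\sqrt{2}$ --- and you are right to flag the block-gluing tradeoff as the unresolved core. So the ``gap'' in your proposal is genuine, but it is the gap in the problem itself, which you identify honestly rather than paper over.

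One point worth adding: the specific rescue you gesture at --- recycling perfect difference sets to boost the local density back to $1$ --- is essentially the route \Erdos{} himself envisioned, formalized as Conjecture~\ref{conjecture:707} (extend any finite Sidon set to a perfect difference set, then iterate between arbitrary extension and completion to a Singer set). The main result of this paper, Theorem~\ref{theorem:main}, is that this route is \emph{closed}: the Sidon set $\{1,2,4,8,13\}$ admits no extension to a finite perfect difference set, so the extension step can fail. This does not refute Conjecture~\ref{conjecture:329}, which may still be true by some other construction, but it does rule out the most natural implementation of the plan you sketch. Any successful splicing scheme would have to avoid relying on completing an already-built initial segment to a perfect difference set.
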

\Erdos{} and \Turan{}~\cite{ErTu41} proved that this $\limsup$ is at most $1$ for every Sidon set~$A$ by analyzing the finite case.
Meanwhile, \Erdos{} showed that there exists a Sidon set~$A$ achieving~$1/2$, and \Kruckeberg{}~\cite{Kr61} achieved~$1/\sqrt{2}$.
\Erdos{} saw one potential path to proving Conjecture~\ref{conjecture:329} via perfect difference sets.
\begin{figure}[t]
\centering
\begin{tikzpicture}
  \def\radius{2.5cm}
  \def\nticks{21}
  \draw[thick] (0,0) circle (\radius);
  \foreach \k in {0,3,4,6,7,8,9,10,11,12,13,14,16,18,19,20} {
    \pgfmathsetmacro{\angle}{90-360*\k/\nticks}
    \pgfmathsetmacro{\angleb}{-360*\k/\nticks}
    \path ({\angle}:\radius)           coordinate (tick\k outer);
    \path ({\angle}:{0.9*\radius})    coordinate (tick\k inner);
    \path ({\angle}:{1.2*\radius})     coordinate (label\k);
    \draw (tick\k outer) -- (tick\k inner);
    \node[font=\scriptsize, rotate=\angleb] at (label\k) {\k};
  }
  \foreach \k in {1,2,5,15,17} {
    \pgfmathsetmacro{\angle}{90-360*\k/\nticks}
    \pgfmathsetmacro{\angleb}{-360*\k/\nticks}
    \path ({\angle}:{1.1*\radius}) coordinate (bigTickOuter\k);
    \path ({\angle}:{0.9*\radius}) coordinate (bigTickInner\k);
    \path ({\angle}:{1.225*\radius})     coordinate (label\k);
    \draw[very thick] (bigTickInner\k) -- (bigTickOuter\k);
    \node[font=\scriptsize, rotate=\angleb] at (label\k) {\large \textbf{\k}};
  }
\end{tikzpicture}
\caption{An illustration of the perfect difference set $B=\{1,2,5,15,17\}\bmod{21}$.
One can check that every difference from~$1$ to~$20$ appears exactly once between the bold ticks; for example, $6$~is witnessed by $2-17\pmod{21}$.
Suppose you have a favorite set, such as $A=\{1,5,15\}$.
\Erdos{}'s conjecture asks whether or not it is always possible to find a perfect difference set~$B$ modulo \emph{some}~$v$ that contains your favorite set~$A$, assuming of course that $A$~doesn't already have a repeated difference.
In this paper, we disprove this conjecture: you will be disappointed if your favorite set is~$\{1,2,4,8,13\}$.}
\end{figure}
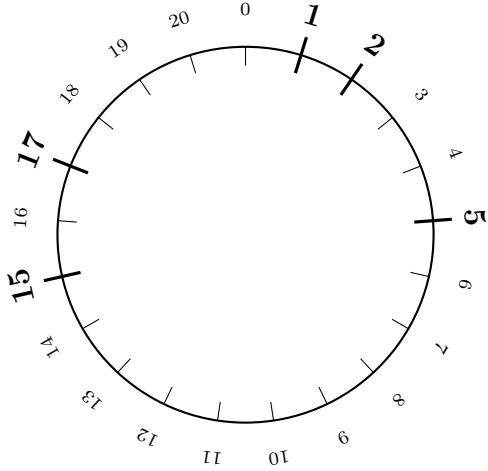
\begin{definition}
Given an abelian group~$G$, a set~$B\subset G$ is a \emph{perfect difference set} for~$G$ if the differences $d=b-b'$ of distinct $b,b'\in B$ represent every nonzero element $d\in G$ exactly once.
If $B$~is a set of integers, then we say that $B$ represents a perfect difference set modulo~$v>0$ if the images of~$B$ modulo~$v$ form a perfect difference set in the cyclic group $\bbZ/v\bbZ$ (the integers modulo~$v$).
In the sequel, we say \emph{finite perfect difference set} when the group is $\bbZ/v\bbZ$, and we say \emph{infinite perfect difference set} when the group is $\bbZ$.
\end{definition}
\begin{remark}
Note that if $B$~represents a perfect difference set modulo~$v$, we insist that the elements of~$B$ be distinct modulo~$v$, as otherwise $b_1\equiv b_2$ would induce a zero difference $b_1-b_2\equiv 0$ of distinct elements in~$B$.
Also, the name~$v$ is chosen for the modulus to align with standard practice in the theory of designs, where we also mention that the word ``perfect'' (the condition that each nonzero residue is represented exactly once) corresponds to $\lambda=1$.
\end{remark}
\begin{vobservation} \label{obs:pds_sidon}
Every perfect difference set is a Sidon set.
\end{vobservation}
By a simple counting argument, a perfect difference set modulo~$v$ can exist only if $v=q^2+q+1$ for some integer~$q$, in which case its size is $\abs{B}=q+1$.
Singer~\cite{Singer} proved that such a perfect difference set exists for each prime power~$q$.

\Erdos{} made the following optimistic\footnote{Besides optimism, \Erdos{} also mentioned as motivation similar results by Treash, Lindner, and others, for Steiner systems and other more complicated combinatorial structures.} conjecture, which would imply Conjecture~\ref{conjecture:329}:
\begin{conjecture}
\label{conjecture:707}
Every finite Sidon set can be extended to a finite perfect difference set.
\end{conjecture}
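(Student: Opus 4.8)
The statement is a conjecture that this paper sets out to refute, so I would aim not to prove it but to disprove it: exhibit a single finite Sidon set that provably extends to no finite perfect difference set. The natural candidate is the initial segment $A=\{1,2,4,8,13\}$ of the greedy Mian--Chowla sequence from Problem~\ref{problem:340}, which is visibly a Sidon set---its ten positive differences $\{1,2,3,4,5,6,7,9,11,12\}$ are distinct, as one checks directly from $1=2-1$, $2=4-2$, $3=4-1$, $4=8-4$, $5=13-8$, $6=8-2$, $7=8-1$, $9=13-4$, $11=13-2$, $12=13-1$. So the real content is to show that no finite perfect difference set contains $A$.

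Suppose toward a contradiction that $A\subseteq B$ for some perfect difference set $B$ modulo $v=q^2+q+1$. Since $\abs{B}=q+1\ge\abs{A}=5$, we have $q\ge 4$. My first move would be to exploit the rigidity of the perfect condition: in $B$ every nonzero residue occurs as a difference exactly once, yet all ten positive differences of $A$ are already realized \emph{inside} $A$. In particular the small values $1,2,3,4,5,6,7$ are ``used up'' by the five elements of $A$, so in $B$ the differences $\pm1,\dots,\pm7$ occur only between the corresponding pairs of $A$. This isolates $A$ as a block occupying residues $1$ through $13$ with no other element of $B$ within cyclic distance $7$ of it, and forces the ``new'' elements $B\setminus A$ to be spread out from $A$ and from each other.

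The main obstacle---the step I expect to be genuinely hard---is converting this local rigidity into a finite bound on the modulus $v$, since a priori $q$ ranges over infinitely many values and an infinite search cannot be formalized. I would attack this on two fronts. First, sharpen the difference-exhaustion argument: the residues $8$ and $10$ (the only values below the diameter not realized inside $A$) must be produced by pairs meeting $B\setminus A$, and carefully tracking which residues the outer elements are permitted to realize should constrain their positions enough to cap $v$. Second, bring in the classical theory of planar difference sets: only prime-power orders $q$ can occur, and the Bruck--Ryser--Chowla theorem together with multiplier theorems both eliminates many orders and, up to translation and multiplication by a unit, reduces each surviving order to essentially a single difference set to test against $A$.

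Once $v$ is bounded by an explicit constant, the remainder is a finite verification: for each admissible $v=q^2+q+1$, enumerate the perfect difference sets modulo $v$ up to the translation and scaling symmetries (which shrinks the search dramatically), and confirm that none contains any translate or unit-multiple of $A$. This bounded combinatorial computation is exactly the kind of check that can be certified in Lean, which is presumably why the authors formalize it. If every case fails, then $A=\{1,2,4,8,13\}$ extends to no finite perfect difference set, refuting Conjecture~\ref{conjecture:707}.
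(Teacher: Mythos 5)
You are right that the task is to refute the conjecture and that $\{1,2,4,8,13\}$ is the counterexample the paper uses, but your proposed proof has a genuine gap at exactly the step you flag as hard: you never actually bound the modulus $v$, and no argument of the kind you sketch can do so. For every prime power $q$ there \emph{exists} a perfect difference set modulo $q^2+q+1$ (Singer), so there are infinitely many moduli to rule out; Bruck--Ryser--Chowla only eliminates certain non-prime-power orders, and multiplier theorems do not reduce the problem to finitely many cases --- indeed the classification of cyclic planes (Conjecture~\ref{conjecture:desargues}) is open, so you cannot even assume each order carries essentially one difference set. Your ``local rigidity'' observation (that the differences $1,\dots,7$ are used up inside $A$, so no other element of $B$ lies within distance $7$ of $A$) is correct but toothless: as $v$ grows there is unbounded room for $B\setminus A$ to live far from $A$, so nothing caps $v$. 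The paper's own remark that $\{1,2,4,8,16,32,64,55,37\}$ is a perfect difference set mod $73$ containing $\{1,2,4,8\}$ shows how far a ``small differences are exhausted'' heuristic can be pushed back; the five-element case requires a genuinely global obstruction, not a search.

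The paper's actual argument is structural and uniform in $v$, with no case analysis over moduli. From a perfect difference set $B$ mod $v=q^2+q+1$ one builds the cyclic projective plane of order $q$ (Lemma~\ref{projective_plane}) and the polarity $x\rightleftharpoons B-x$ (Lemma~\ref{polarity}); a point $x$ is absolute iff $2x\in B$, so $1,2,4$ are three absolute points all lying on the line $B+0$. Baer's results then give a dichotomy: if $q$ is odd, a polarity with exactly $q+1$ absolute points admits at most two absolute points per line (Proposition~\ref{odd2}), contradiction; if $q$ is even, all $q+1$ absolute points lie on one line (Proposition~\ref{even2}), forcing $8$ to be absolute, hence $16\in B$, and then $16-13=4-1$ violates the Sidon/perfect-difference property. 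That is the missing idea in your proposal: a parity-of-$q$ dichotomy coming from the polarity, which kills all moduli at once. The finite computation you defer to Lean does not exist in the paper; Lean is used only to certify this structural argument.
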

Indeed, one could then start with any Sidon set~$A$ and repeatedly perform two operations: arbitrarily extending~$A$ with some new element (which is always possible) and extending~$A$ to a perfect difference set (taking the smallest positive representatives modulo~$v$).
Since a perfect difference set up to~$q^2+q+1$ has size~$q+1$ (which is asymptotically equivalent to $\sqrt{q^2+q+1}$), Conjecture~\ref{conjecture:329} follows.

\Erdos{} posed Conjecture~\ref{conjecture:707} in at least 16~papers \cite{Er76b,Er77c,Er79g,Er80,Er81,Er84b,Er84d,Er85c,Er90Prob,Er91,Er92d,Er94c,Er97c,ErFr91b,ErGr80,WNT91} with publication dates from~1976 to~1997, differing slightly in the meaning of ``extended'' as well as which perfect difference sets are allowed.
This problem also appears in both sections~\textbf{C9} (``Packing sums of pairs'', including Sidon sequences) and~\textbf{C10} (``Modular difference sets and error correcting codes'', including perfect difference sets) of Guy's ``Unsolved Problems in Number Theory''~\cite{Guy04}.
In 1980~\cite{Er80}, \Erdos{} wrote:
\begin{quote}
[Conjecture~\ref{conjecture:329}] would follow from one of my favourite recent conjectures:
Let $1\le a_1<\cdots<a_k$ be a finite $B_2$ sequence.
Prove that it can be imbedded into a perfect difference set, i.e.\ there is a prime~$p$ and a set of $p+1$ residues $u_1,\ldots,u_{p+1}\bmod{p^2+p+1}$ so that all the differences $u_i-u_j$ are incongruent mod $p^2+p+1$ and the~$a$'s all occur amongst the~$u$'s.
I offer a thousand dollars for a proof or disproof of this conjecture.
\end{quote}
The main difference regarding the perfect difference sets is whether one requires~$p$ to be prime (as in the text above).
Sometimes there is no restriction, whereas other times there is an unqualified $p=q^\alpha$, thus presumably referring to a prime power.
At least once~\cite{Er91}, he wondered whether perhaps it might be true for all sufficiently large primes~$p$.

\Erdos{} famously offered prizes for solutions to many of his favorite problems.
The dollar amounts covered a wide range, ranging from ten dollars to thousands of dollars.
Only three problems are known~\cite{prizes} to have a prize larger than \$1000.
(One regarding prime gaps was solved by Maynard~\cite{Ma16} and Ford, Green, Konyagin, and Tao~\cite{FGKT16}.
The other two are still unsolved; both ask about the density of sets avoiding arithmetic progressions in different ways.
One is the Erdős conjecture on arithmetic progressions, which states that if $A$ is a set of positive integers such that $\sum_{n\in A} 1/n$ diverges, then $A$ contains arbitrarily long arithmetic progressions.)

If one requires $p$~to be prime (as in \Erdos{}'s statement of his \$1000 problem above), then we exhibit a particularly simple counterexample with four elements (the first powers of two):
\begin{restatable}{vtheorem}{theoremp}\label{1248}
The finite Sidon set $\{1,2,4,8\}$, which is the first \emph{four} terms of the Mian--Chowla sequence, does not extend to a finite perfect difference set modulo $v=p^2+p+1$ for any prime~$p$.
\end{restatable}
If one allows for an arbitrary perfect difference set (as in our statement of Conjecture~\ref{conjecture:707}), then we exhibit a counterexample with five elements:
\begin{restatable}{vtheorem}{theoremq}\label{theorem:main}
The finite Sidon set $\{1,2,4,8,13\}$, which is the first \emph{five} terms of the Mian--Chowla sequence, does not extend to a finite perfect difference set modulo any $v>0$.
\end{restatable}

During the preparation of this paper, we were surprised to learn that Marshall Hall,~Jr.\ already disproved Conjecture~\ref{conjecture:707} in~1947~\cite{Hall}, writing the following:
\begin{quote}
From this theorem it immediately follows that there are many sets of integers satisfying the conditions of [Definition~\ref{def:sidon}] which cannot be extended to any finite [perfect] difference set.
For example the set $-8, -6, 0, 1, 4$ may not be so extended.
\end{quote}
Because one can translate perfect different sets, an equivalent counterexample containing only positive integers can be produced by adding~$9$ to obtain $\{1,3,9,10,13\}$.

Clearly, it appears that \Erdos{} was not aware of this result.
The authors of this paper were also unaware of this result, even though they performed a reasonably deep literature search prior to starting this project.
(In fact, no large language model could find Hall's result, even with substantial prompting that the result indeed exists.) 
Instead, the paper was discovered by accident when searching for support for Conjecture~\ref{conjecture:desargues}.

We were completely taken aback by our discovery of Hall's result.
Hall's paper appeared in a famous journal, and it is clear that the paper is well-known, since, for example, it is cited by Guy~\cite{Guy04} just two sentences before stating Conjecture~\ref{conjecture:707} in section~\textbf{C10}.
So then why is Hall's counterexample not an accepted resolution of this conjecture?
Maybe there's a missing hypothesis?
Maybe it contains a deeper flaw?
In pursuit of the truth, we were determined to obtain a formal proof; we chose Lean~\cite{Lean:4} as our proof assistant because many other mathematicians have done the same recently.
However, since we're not experts in Lean (nor any other proof assistant), we prompted ChatGPT~\cite{OpenAI25} to write the code for us.
In the end, we obtained a human-assisted Lean proof of Hall's counterexample, and of Theorems~\ref{1248} and~\ref{theorem:main} (mind the double check marks \doublecheck{} above).

The resulting formal proof is thousands of lines, nearly all of which were written by ChatGPT.
Accordingly, we believe that ChatGPT is properly \emph{an author} of the formal proof accompanying this paper\footnote{However, the policy of the arXiv and other venues is that generative AI should not be listed as an author.}.
Unfortunately, large language models are known to hallucinate and otherwise produce incorrect results, so we would not be able to trust this proof unless it was in a formal language.
Therefore, we believe Lean is also an author, or perhaps ChatGPT and Lean may be credited together.
Even so, while our formal proofs were written and verified by a large language model and a proof assistant, respectively, we stress that this paper was written the old-fashioned way by human authors.

Before diving into the proof, we briefly discuss \emph{infinite} Sidon and perfect difference sets in Section~\ref{section:infinite}.
In Section~\ref{section:direct}, we give a direct proof of Theorem~\ref{1248}.
In Sections~\ref{section:plane} and~\ref{section:main}, we follow Hall's proof by building up some mathematical structure involving projective planes in order to prove Theorems~\ref{1248} and~\ref{theorem:main}, as well as the correctness of Hall's original counterexample.
Section~\ref{section:lean} discusses our Lean proof, which we vibe coded using ChatGPT, while Section~\ref{section:ai} discusses the broader role of AI in this paper.
We conclude in Section~\ref{section:future} with ideas for future directions.

\section*{Acknowledgments}

The online collection \url{erdosproblems.com} includes Problem~\ref{problem:340} as~$\#$340~\cite{dotcom340}, Conjecture~\ref{conjecture:329} as~$\#$329~\cite{dotcom329}, and Conjecture~\ref{conjecture:707} as~\#707~\cite{dotcom707}.
(Problem \#44~\cite{dotcom44} may also be of interest, as it is intermediate between~\#707 and~\#329.)
The authors thank Thomas Bloom for creating and curating this very useful resource.
We also appreciate the lively community that has arisen around it.

Our formalization would have been much more difficult without the proof assistant Lean.
The authors are especially grateful to the Mathlib community for their unified library of mathematics in Lean4, and to the Formal Conjectures authors for including this conjecture in their repository.
Even with the extensive Mathlib library, we found it surprisingly painful to successfully write proofs in Lean, so we also thank OpenAI for producing a large language model (namely, ChatGPT with GPT-5 -- but without ChatGPT Pro) that is capable of meeting a human mathematician halfway (or at least partway).

DGM was partially supported by NSF DMS 2220304.

\section{Some minor remarks on infinite sets}
\label{section:infinite}

The following table summarizes whether a finite/infinite Sidon set can always be extended to a finite/infinite perfect difference set:
\begin{center}
\renewcommand{\arraystretch}{1.5}
\begin{tabular}{cc|cc}
\multicolumn{2}{c|}{%
  \multirow{2}{*}{%
    \makecell[c]{Does a Sidon set always extend\\to a perfect difference set?}%
  }%
} & \multicolumn{2}{c}{\textbf{perfect difference set}} \\
\multicolumn{2}{c|}{} & \textit{finite} & \textit{infinite} \\
\hline
\multirow{2}{*}{\textbf{Sidon set}}
& \textit{finite}  & No (this paper) & Yes (see Claim~\ref{claim:v=0}) \\
& \textit{infinite} & No (pigeonhole) & No (see Reference~\cite{CiNa08}) \\
\end{tabular}
\renewcommand{\arraystretch}{1}
\end{center}

We briefly mention the infinite case.

\begin{claim}
\label{claim:v=0}
Every finite Sidon set can be extended to an infinite perfect difference set.
\end{claim}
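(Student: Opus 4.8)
The plan is to build the infinite perfect difference set by a greedy inductive extension. Recall that a set $B\subset\bbZ$ is an infinite perfect difference set precisely when it is a Sidon set (so each nonzero integer arises at most once as an ordered difference $b-b'$) and, in addition, every nonzero integer arises at least once. Since differences occur in pairs $b-b'$ and $b'-b$, it suffices to arrange that every \emph{positive} integer is represented. So, starting from the given finite Sidon set $A$, I would construct a nested sequence of finite Sidon sets $A=B_0\subseteq B_1\subseteq B_2\subseteq\cdots$ in which $d$ is a difference of two elements of $B_d$ (handling the targets $d=1,2,3,\dotsc$ in turn, and setting $B_d=B_{d-1}$ whenever $d$ happens to already be represented), and then take $B=\bigcup_k B_k$.

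The key step is the extension: given a finite Sidon set $B$ and a target $d>0$ not yet represented, adjoin two new integers $u$ and $u+d$, where $u$ is chosen sufficiently large. The difference between the two new elements is exactly $d$, so $d$ becomes represented, and it remains to check that $B'=B\cup\{u,u+d\}$ stays Sidon. The new differences (up to sign) are $d$ itself together with $u-b$ and $(u+d)-b$ for $b\in B$. For $u$ larger in absolute value than every existing difference, these auxiliary differences all exceed anything previously present, so they cannot collide with old differences; they are mutually distinct because $u-b_1=u-b_2$ (and likewise for $u+d$) forces $b_1=b_2$, while $u-b_1=(u+d)-b_2$ would force $b_2-b_1=d$, impossible since $d$ is not a difference in $B$. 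Hence only finitely many values of $u$ are forbidden and a valid choice exists.

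Finally I would pass to the limit. The union $B=\bigcup_k B_k$ is Sidon: any coincidence $b_1-b_2=b_3-b_4$ among distinct ordered pairs involves four elements lying in a common $B_k$, contradicting that $B_k$ is Sidon. Every positive $d$ is represented already in $B_d\subseteq B$, hence so is every nonzero integer, and by the Sidon property each is represented exactly once. Thus $B$ is an infinite perfect difference set containing $A$.

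The main obstacle is really only the bookkeeping in the extension step, namely verifying that pushing the two new points far to the right avoids all finitely many coincidences while still planting the desired difference $d$; the $\pm d$ pairing and the direct-limit argument are routine once this step is in hand.
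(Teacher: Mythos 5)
Your proposal is correct and is precisely the ``just-do-it'' greedy construction that the paper attributes to Hall (Theorem~3.1 of~\cite{Hall}): repeatedly plant each missing difference $d$ by adjoining a far-out pair $\{u,u+d\}$, noting that only finitely many choices of $u$ are obstructed, and pass to the union. The paper gives no further detail, so your write-up simply fills in the same argument.
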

\begin{proof}
This result and its proof are stated formally by Hall~\cite[Theorem 3.1]{Hall}, but the proof is essentially a ``just-do-it''~\cite{JDI1,JDI2} greedy construction wrapped around the observation that there are only finitely many obstructions to adding a missing difference~$d$ by inserting two elements~$x$ and $x+d$.
\end{proof}

Alternatively, one may ask whether an \emph{infinite} Sidon set can always be extended to an infinite perfect difference set.
Here, the answer is no, as pointed out by Cilleruelo and Nathanson~\cite{CiNa08}.
An easy counterexample is $A=\{2b\mid b\in B\}$, where $B$ is an infinite perfect difference set.
Indeed, $A$ is a Sidon set, but adding an even number to $A$ causes an immediate problem, while adding two odd numbers also results in a contradiction.
Thus, $A$ cannot be extended to a perfect difference set.

Finally, obviously an infinite Sidon set cannot be extended to a finite perfect difference set.

Having briefly discussed these infinite cases, the remainder of this paper will focus on \emph{finite} Sidon and perfect difference sets, as \Erdos{} had done in Conjecture~\ref{conjecture:707}.

\section{A direct solution to \Erdos{}'s \$1000 problem} \label{section:direct}

The next two sections build up various structures involving projective planes in order to prove Theorems~\ref{1248} and~\ref{theorem:main} and confirm Hall's original counterexample following his original proof.
In what follows, we present a short, direct proof of Theorem~\ref{1248} that bypasses all of this extra structure.
(To be explicit, we note that the proof in this section was not verified in our Lean code, because our interest in verification was driven by the desire to check \emph{Hall's} proof specifically.
The \emph{theorem} is still labeled as verified because it is proven in a different manner in the following two sections.)

\theoremp*
\begin{proof}
Suppose, for contradiction, that~$B$ represents a perfect difference set modulo $v=p^2+p+1$ that contains $\{1,2,4,8\}$.
We identify $B$~with the set of residues comprising its images modulo~$v$, and write an equals sign to denote modular equality.
(Note that $\{1,2,4\}$ is a perfect difference set modulo $7=2^2+2+1$, but this does not count as an extension of $\{1,2,4,8\}$ precisely because $1$~and $8$~are identical modulo~$7$.)

As \Erdos{} already observed in the quotation in Section~\ref{section:intro}, the perfect difference set $B$ has cardinality $p+1$ because there are $v-1=p^2+p=(p+1)\cdot p$ nonzero residues modulo~$v$ and $\abs{B}\cdot(\abs{B}-1)$ ordered pairs of distinct elements of~$B$.
Since $B$ contains at least four elements by assumption, we have $p\ge 3$, whence the prime~$p$ is odd and $\abs{B}=p+1$ is even.

We prove a small result inline that will be useful later.
Consider an arbitrary $a\notin B$.
Define the function $f_a\colon B\to B$ by $f_a(b)=c$ if $a-b= c-d$ in the unique representation of $a-b$ with $c,d\in B$.
(The residue $a-b$ is nonzero precisely because $a\notin B$.)
This function~$f_a$ is an involution, because if $f_a(b)=c$ then clearly $a-c= b-d$ is the corresponding representation of $a-c$ and thus $f_a(c)=b$.
Finally, note that if $f_a$~has a fixed point~$b$, i.e.,~$f_a(b)=b$, that implies that $a-b= b-d$ or equivalently $2b= a+d$.

As the main part of the proof, for each $x\in B$, we will find two elements $b_x,d_x\in B$ so that
$$2b_x= 2(x-1)+d_x,$$
where moreover $d_x$ is different for different~$x$.
If~$x=2$, then we choose $b_2=d_2=2$ as in $2\cdot 2=2\cdot(2-1)+2$.
Otherwise, we will use the construction from the previous paragraph with $a=2(x-1)$.
Observe that $a\notin B$ because otherwise $x-2=a-x$ would be two distinct representations of the nonzero residue $x-2$ as differences in~$B$.
Because $f_a$~is an involution on the set~$B$ with even cardinality, it has an even number of fixed points.
One such fixed point is~$x$ because $2(x-1)-x=x-2$, so it has another fixed point, say~$b_x\ne x$.
Together with its corresponding~$d_x$, we obtain our desired identity $2b_x= 2(x-1)+d_x$.

Suppose $x\ne x'$ but $d_x = d_{x'}$.
Then we may subtract $2b_x= 2(x-1)+d_x$ and $2b_{x'}= 2(x'-1)+d_{x'}$ to get $2(b_x-b_{x'})= 2(x-x')$ or equivalently (because the modulus~$v$ is odd) $b_x-b_{x'}= x-x'$.
These two representations must be identical, so $b_x=x$ and $b_{x'}=x'$.
But in choosing~$b_x$ and~$b_{x'}$, we insisted that $b_x\ne x$ and $b_{x'}\ne x'$ except possibly if $x=2$ and $x'=2$, a contradiction.

Finally, since we have a different~$d_x$ for each~$x$, the~$d_x$s are a permutation of the finite set~$B$ and there exists an~$x$ for which $d_x=4$.
Expanding the corresponding identity gives $2b_x= 2(x-1)+4$ or (again because $v$~is odd) $b_x= x+1$.
The only representation of $1$ as a difference $b_x-x$ in~$B$ is as $1=2-1$, so $x=1$ and thus $d_1=4$.
Similarly, there exists an~$x'$ for which $d_{x'}=8$.
Reasoning similarly, noting the only representation of~$3$ as $3=4-1$, we obtain $x'=1$ as well and thus $d_1=8$, a contradiction.
\end{proof}

\begin{remark}
The condition that $p$~is prime is necessary for this counterexample.
Indeed,
$$\{1,2,4,8,16,32,64,128,256\}\equiv\{1,2,4,8,16,32,64,55,37\} \pmod{73}$$
is a perfect difference set mod~$73=8^2+8+1$.
Accordingly, we need another element in our counterexample in Theorem~\ref{theorem:main}.
\end{remark}

\section{Cyclic projective planes}
\label{section:plane}

Throughout the more structured solution of the problem, we use the terminology of (finite) projective planes.
We will gradually build up progressively more structure, which will prove very useful.

\begin{definition}
A projective plane is an incidence structure between \emph{points} and \emph{lines} such that
\begin{itemize}
\item given any two distinct points, there is exactly one line incident with both of them,
\item given any two distinct lines, there is exactly one point incident with both of them, and
\item the plane is not degenerate.
\end{itemize}
The specific phrasing of non-degeneracy varies across different sources.
Often the condition is that there are four points, no three of which are collinear.
At the present moment, Mathlib has the condition
$$p_1 \notin l_2 \wedge p_1 \notin l_3 \wedge p_2 \notin l_1 \wedge p_2 \in l_2 \wedge p_2 \in l_3 \wedge p_3 \notin l_1 \wedge p_3 \in l_2 \wedge p_3 \notin l_3,$$
described as ``three points in general position''.
\end{definition}
\begin{vobservation} \label{obs:order}
If the projective plane is finite, then there exists a positive integer $q\ge 2$, called the \emph{order} of the plane, such that there are $q^2+q+1$ points (each on $q+1$ lines) and $q^2+q+1$ lines (each with $q+1$ points).
\end{vobservation}
\begin{proof}
Pleasantly, these basic results about projective planes are already known to Mathlib.
\end{proof}
We very quickly recap some well-known facts about projective planes:
Given a finite field of order~$q$, one can construct a projective plane of order~$q$.
Thus, there exist projective planes of order~$q$ for all prime powers~$q$.
It is unknown whether there is a projective plane of any other order.
Order~$12$ is the smallest for which it is unknown whether a projective plane exists.

Desargues's theorem states that ``Two triangles are in perspective axially if and only if they are in perspective centrally'' (but because it is inessential for the remainder of the paper, we do not elaborate on what this means).
A projective plane satisfying Desargues's theorem is called \emph{Desarguesian}, which includes all planes defined from a field (as mentioned above).
There exist non-Desarguesian projective planes (though the known finite examples are still of prime power order).

We mention all of this background because perfect difference sets correspond to \emph{cyclic} projective planes.
This is a projective plane with the additional structure of a regular action by a cyclic group (a kind of \emph{collineation}).
We do not prove the equivalence of these two notions here, but rather only the single direction we need:

\begin{vlemma}[one direction of Theorem~2.1 in Hall~\cite{Hall}, but likely implicitly known to Singer~\cite{Singer} and others]
\label{projective_plane}
Suppose $B$~is a perfect difference set modulo $v=q^2+q+1$ with $q\ge 2$.
Considering the residues~$x$ modulo~$v$ as points, the translates~$B+y$ (also modulo~$v$) of~$B$ as lines, and incidence as set membership (so point~$x$ is on line~$B+y$ if $x\in B+y$, or equivalently $x-y\in B$) produces a projective plane of order~$q$.
\end{vlemma}
\begin{proof}
The axioms of the projective plane follow straightforwardly from the unique representations of all nonzero residues modulo~$v$ as differences in~$B$.
For example, if $x-x'=b-b'$ is the unique representation of the nonzero residue $x-x'$ with $b,b'\in B$, then $B+(x-b)=B+(x'-b')$ is the unique line through the distinct points~$x$ and~$x'$.
\end{proof}

Singer~\cite{Singer} showed that the projective planes constructed from finite fields are cyclic, and thus perfect difference sets exist modulo $v=q^2+q+1$ when $q$~is a prime power.
The ``prime power conjecture'' states that there are no cyclic projective planes (and thus no perfect difference sets) for any other order.
Unlike the situation above without the extra ``cyclic'' condition, this has been verified at least up to two billion~\cite{BG04}.
Because projective planes constructed from fields are Desarguesian, a related conjecture is the following:
\begin{conjecture}
\label{conjecture:desargues}
Every finite cyclic projective plane (and thus perfect difference set) is Desarguesian.
\end{conjecture}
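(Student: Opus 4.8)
The plan is to attack this through the theory of \emph{multipliers} of planar difference sets, which is the standard machinery for constraining cyclic planes; since Conjecture~\ref{conjecture:desargues} is a well-known open problem, what follows is necessarily a line of attack rather than a complete argument. By Lemma~\ref{projective_plane} (together with the converse half of Hall's equivalence, which supplies a regular cyclic collineation group), a finite cyclic projective plane of order $q$ is the same data as a perfect difference set $B$ modulo $v = q^2+q+1$, so it suffices to analyze such a $B$ inside $\bbZ/v\bbZ$. The first step is to invoke Hall's multiplier theorem~\cite{Hall}: since here $\lambda=1$, every prime $p$ dividing $q$ is a multiplier, meaning $pB = B + s_p$ for some shift $s_p$ (the coprimality $\gcd(p,v)=1$ is automatic because $v\equiv 1 \pmod p$). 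After translating $B$ to be fixed setwise by the multiplier group, one gains a rigid algebraic handle: $B$ becomes a union of orbits of the map $x\mapsto px$, which is exactly the structure exhibited by Singer's construction~\cite{Singer}, where multiplication by $p$ realizes the Frobenius automorphism of $\mathbb{F}_{q^3}$ over $\mathbb{F}_q$.

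The second step would be to upgrade this structural resemblance into an identification: one wants to reconstruct a field structure on the $q^2+q+1$ points from the Singer cycle together with its prime multipliers, thereby recognizing the plane as $\mathrm{PG}(2,q)$. Once the plane is known to be $\mathrm{PG}(2,q)$, it is coordinatized by $\mathbb{F}_q$ and hence Desarguesian; equivalently, one appeals to the classical fact that a finite projective plane coordinatized by a division ring is Desarguesian, together with Wedderburn's little theorem that every finite division ring is a field.

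The hard part --- indeed the reason this remains a conjecture --- is that the second step secretly presupposes what cannot yet be proved, namely that $q$ is a prime power, since otherwise there is no field $\mathbb{F}_q$ and no Singer plane against which to match $B$. In fact Conjecture~\ref{conjecture:desargues} is at least as hard as the prime power conjecture for cyclic planes: a finite Desarguesian plane is $\mathrm{PG}(2,q)$ for a field of order $q$, so its order is automatically a prime power, and thus any proof of the Desarguesian conjecture would immediately resolve the prime power conjecture, which has been verified computationally only up to roughly two billion~\cite{BG04}. I therefore expect the genuine obstacle to lie not in the final Desarguesian deduction, which is essentially classical once a Singer structure is in hand, but in the prior problem of forcing an abstract cyclic plane to embed in $\mathrm{PG}(2,q)$ for some prime power $q$: the multiplier method delivers strong rigidity \emph{relative to} a conjectured field and rules out many candidate parameter sets, but it offers no mechanism to manufacture the field in the first place.
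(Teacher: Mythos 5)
The paper does not prove this statement: Conjecture~\ref{conjecture:desargues} is stated as an \emph{open} conjecture, mentioned only as context (it motivated alternate proofs the authors do not present, and searching for evidence about it is how they found Hall's paper), and the paper explicitly declines to use it precisely because it is open. So there is no proof to compare against, and your proposal --- candidly framed as a line of attack rather than an argument --- is correctly calibrated on that point. The ingredients you cite are real: Hall's multiplier theorem does give that every prime dividing the order $q$ of a planar difference set ($\lambda=1$) is a multiplier, some translate of $B$ is fixed setwise by the multiplier group, and the resulting orbit structure under $x\mapsto px$ does mimic the Frobenius action in Singer's construction. Your reduction observation is also correct and worth stating: since a finite Desarguesian plane is $\mathrm{PG}(2,q)$ over a field, Conjecture~\ref{conjecture:desargues} implies the prime power conjecture for cyclic planes, so it is at least as hard as a problem that has only been verified computationally~\cite{BG04}.

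The genuine gap is the one you name yourself, but it is worth sharpening: the obstacle is not only that one cannot manufacture the field $\mathbb{F}_q$ when $q$ is not known to be a prime power. Even granting that $q$ is a prime power, it is not known that a cyclic plane of that order must be $\mathrm{PG}(2,q)$; being a union of multiplier orbits is a necessary condition shared by Singer difference sets, but no argument is known that upgrades this to an identification of the incidence structure (non-Desarguesian planes of prime power order exist, and one must rule out their admitting a Singer cycle). So your phrase ``essentially classical once a Singer structure is in hand'' is close to circular --- having a Singer structure in the strong sense is tantamount to the conclusion. None of this is a defect in your write-up as a survey of the difficulty, but it is not a proof, and no step in it could be completed with current knowledge.
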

We mention this conjecture for a couple reasons.
First, if it were known to be true, we have constructed alternate proofs of Theorem~\ref{theorem:main} (which we do not describe here).
Intuitively, this is because Desargues's theorem is a fairly strong restriction and interacts with the perfect difference set condition.
Second, it is precisely researching the support for this conjecture that led the authors to find Hall's paper~\cite{Hall}, where he says
\begin{quote}
The properties found in [that paper's Section~4] make it highly plausible that every finite cyclic plane is Desarguesian.
\end{quote}
However, because Conjecture~\ref{conjecture:desargues} is open, we will not mention Desargues's theorem in the remainder of this work.

We note here an observation about Singer's construction that was useful to us during the exploratory phase of this work.
The usual exposition of this construction is algebraic and chooses a primitive root in the field $GF(q^3)$, seen as a field extension of $GF(q)$, and then reasons about the (cyclic!)\ multiplicative group of $GF(q^3)$.
This construction is very nice, but we found it very practical to work with a different one on the computer:
\begin{construction}[\cite{Ha38Iso}, according to~\cite{Hall}]\label{construction}
Given constants $a_1,a_2,a_3\in GF(q)$, define a sequence $(x_k)$ of elements of $GF(q)$ via the initial conditions $x_0=0$, $x_1=0$, $x_2=1$, and the third-order linear recurrence relation $x_k=a_1 x_{k-1}+a_2 x_{k-2}+a_3 x_{k-3}$ for $k\ge 3$.
Then consider the indices $k$ for which $x_k=0$.
For some values of the constants $a_1,a_2,a_3$, these indices are periodic modulo $q^2+q+1$ and form a perfect difference set.
\end{construction}
Of course, the astute reader will see the relationship between this construction and Singer's.
Moreover, it is possible to make the imprecise words ``for some values of the constants $a_1,a_2,a_3$'' exact and to classify which values work.
The reason we wanted to share this construction is that we found that randomly choosing $a_1,a_2,a_3$, then simply checking whether the construction succeeded (and retrying otherwise) was very effective in practice and required minimal coding.

\section{Polarities and the remainder of Hall's proof}
\label{section:main}

Hall's proof factors through a handful of geometric results from Baer~\cite{Ba45,Ba46} that identify intricate aspects of the well-known duality between points and lines in projective planes.
We start by introducing the notions of \emph{polarity} and \emph{absolute} points and lines.

\begin{definition}
A \emph{polarity} (an involutive duality/correlation) for a projective plane is a map $\pi$ that switches its points and lines, preserves incidence, and has order two (applying it twice gives the identity).
\end{definition}

\begin{vlemma}[Theorem~2.3 in Hall~\cite{Hall}]
\label{polarity}
The map~$\pi$ given by $x\rightleftharpoons B-x$ (swapping the point $x$ with the line $B-x$) is a polarity for the construction from Lemma~\ref{projective_plane}.
\end{vlemma}
\begin{proof}
The map~$\pi$ is clearly an involution that swaps points and lines, and it preserves incidence:
\[
\text{$x \in B+y$}
~~~\Longleftrightarrow~~~ \text{$x-y=(-y)-(-x)\in B$} 
~~~\Longleftrightarrow~~~ \text{$-y\in B-x$} 
~~~\Longleftrightarrow~~~ \text{$\pi(B+y) \in \pi(x)$}.
\qedhere
\]
\end{proof}

\begin{definition}
Fix a polarity~$\pi$ for a projective plane.
A point~$p$ is called \emph{absolute} (with respect to the polarity~$\pi$) if~$p$ lies on its polar line~$\pi(p)$.
Dually, a line~$\ell$ is called \emph{absolute} if~$\ell$ contains its polar~$\pi(\ell)$.
\end{definition}

\begin{vlemma}[Lemma~4.1 in Hall~\cite{Hall}]
\label{absolute_n+1}
A point~$x$ is absolute with respect to the polarity~$\pi$ from Lemma~\ref{polarity} if and only if $2x\in B$.
It follows that there are exactly $q+1$ absolute points.
\end{vlemma}
\begin{proof}
For the first part, we have
\[
\text{point~$x$ is absolute}
~~~\Longleftrightarrow~~~ \text{$x$~lies on the line $B-x$}
~~~\Longleftrightarrow~~~ \text{$2x=x-(-x)\in B$}.
\]
We know from earlier that $v=q^2+q+1$, so $v$~is odd and $2$~has a multiplicative inverse modulo~$v$.
Thus, for each $b\in B$, there is a unique residue~$x$ such that $2x=b$, and there are exactly $q+1$ absolute points, one corresponding to each element of~$B$.
\end{proof}

\begin{vproposition}[Lemma in Baer~\cite{Ba45}]
\label{absolute1}
For an arbitrary polarity for an arbitrary projective plane:
An absolute line contains one and only one absolute point.
\end{vproposition}
\begin{proof}
By definition, an absolute line~$\ell$ contains its polar~$\pi(\ell)$, which is also absolute by definition, so it contains at least one absolute point.

Suppose~$p$ is an absolute point lying on the absolute line~$\ell$.
By the polarity property, the point~$\pi(\ell)$ lies on the line~$\pi(p)$.
It follows that~$p$ and~$\pi(\ell)$ are two points that both lie on the lines~$\ell$ and~$\pi(p)$.
If $p$~did not equal~$\pi(\ell)$, we would have two distinct lines passing through two distinct points in a projective plane, a contradiction.
Thus~$\pi(\ell)$ is the unique absolute point lying on an absolute line~$\ell$.
\end{proof}

\begin{vproposition}[Lemma in Baer~\cite{Ba46}]
\label{not_absolute}
For an arbitrary polarity for an arbitrary projective plane:
If a line is not absolute, then the number of points on it which are not absolute is even.
\end{vproposition}
\begin{proof}
Suppose the line~$\ell$ is not absolute, so it does not contain its polar~$\pi(\ell)$.
If $p$~is a point on~$\ell$, then denote by~$p'$ the intersection of the lines~$\ell$ and~$\pi(p)$.
By the properties of a polarity, $\pi(p)$~is the unique line passing through~$\pi(\ell)$ and~$p'$.
It also follows that~$p$ is absolute if and only if $p=p'$, and that $p''=p$.
The points on~$\ell$ which are not absolute therefore occur in pairs, proving our contention.
\end{proof}

\begin{vproposition}[Theorem~2 in Baer~\cite{Ba46}]
\label{odd1}
For an arbitrary polarity for an arbitrary projective plane, of \textbf{odd} order~$q$:
A line is absolute if and only if it carries exactly one absolute point.
\end{vproposition}
\begin{proof}
The proof is immediate.
If $\ell$~is absolute, then it contains exactly one absolute point by Proposition~\ref{absolute1}.
Otherwise, $\ell$~contains $q+1$~points in total by Observation~\ref{obs:order}, which is even.
An even number of those are not absolute by Proposition~\ref{not_absolute}, so the remainder comprise an even number of absolute points.
In particular, there is not exactly one absolute point.
\end{proof}

\begin{vproposition}[one direction of Corollary~1 (of Theorem~5) in Baer~\cite{Ba46}]
\label{odd2}
For an arbitrary polarity for an arbitrary projective plane of \textbf{odd} order~$q$:
If the polarity has exactly $q+1$ absolute points, then there are no more than two absolute points on any line.
\end{vproposition}
\begin{proof}
Consider an absolute point~$p$, which by definition lies on its polar line~$\pi(p)$.
Consider any other line~$\ell\ne\pi(p)$ through~$p$.
By Proposition~\ref{odd1}, it must contain at least one more absolute point.
(If the line~$\ell$ were absolute, it would contain the absolute point~$\pi(\ell)\ne p$, which would suffice for the claim.
But that actually contradicts Proposition~\ref{absolute1}, so $\ell$~is not absolute.)

We have identified all $q+1$ absolute points: besides~$p$, there are the $q$~distinct absolute points on each of the $q$~other lines through~$p$.
(The points are distinct because different lines through~$p$ intersect only at~$p$.)
Hence every line through~$p$ contains at most one absolute point other than~$p$.

This result applied to~$\ell$ proves that~$\ell$ contains at most two absolute points in total.
\end{proof}

\theoremp*
\begin{proof}[Another proof (see Section~\ref{section:direct} for the first proof)]
Construct the projective plane from Lemma~\ref{projective_plane}, and fix the polarity from Lemma~\ref{polarity}.

From Lemma~\ref{absolute_n+1}, we see that $1$, $2$ and $4$ are absolute points that all lie on the single line $B+0$.
Thus, $p$~is not odd by Proposition~\ref{odd2}.
Unfortunately, the sole even prime~$p=2$ is too small to extend the set~$A$.
\end{proof}

\begin{vproposition}[Theorem~1 in Baer~\cite{Ba46}]
\label{even1}
For an arbitrary polarity for an arbitrary projective plane of \textbf{even} order~$q$:
Every line carries an odd number of absolute points.
\end{vproposition}
\begin{proof}
If $q$~is even, then every line contains $q+1$~points, which is odd.
By Proposition~\ref{not_absolute}, an even number of these are not absolute, leaving an odd number of absolute points as desired.
\end{proof}

\begin{vproposition}[The direction (i) implies (iii), from Corollary~2 (of Theorem~5) in Baer~\cite{Ba46}]
\label{even2}
For an arbitrary polarity for an arbitrary projective plane of \textbf{even} order~$q$:
If the polarity has exactly $q+1$ absolute points, then all absolute points lie on a line.
\end{vproposition}
\begin{proof}
Suppose the line~$\ell$ contains a point~$p$ which is not absolute.
Each of the $q+1$~lines through~$p$ contain an absolute point by Proposition~\ref{even1}, which are all distinct and different from~$p$.
(As a few proofs ago: the points are distinct because different lines through~$p$ intersect only at~$p$.)
This accounts for all $q+1$~absolute points, so each line through~$p$ contains exactly one absolute point.
In other words, if a line~$\ell$ contains a point which is not absolute, then it contains exactly one absolute point.

There are at least $q+1\ge 2$ absolute points, so if we consider a line through any two of them, all points on that line must be absolute.
Furthermore, again by counting, that accounts for all $q+1$~absolute points.
\end{proof}

\theoremq*
\begin{proof}
Construct the projective plane from Lemma~\ref{projective_plane}, and fix the polarity from Lemma~\ref{polarity}.
Note that this projective plane has some order~$q$, which of course is either even or odd.

As before, from Lemma~\ref{absolute_n+1}, we see that $1$, $2$ and $4$ are absolute points that all lie on the single line $B+0$.
Thus, $q$~is not odd by Proposition~\ref{odd2}.

But if $q$~is even, all points on the line $B+0$ are absolute by Lemma~\ref{even2}.
In particular, $8$~is absolute and thus $2\cdot 8=16\in B$ by Lemma~\ref{absolute_n+1}.

We obtain a contradiction because $16-13=4-1$ violates the defining property of a perfect difference set.
\end{proof}

\begin{vtheorem}[the paragraph following Theorem~4.3 in Hall~\cite{Hall}]
\label{86014}
$\{-8,-6,0,1,4\}$ does not extend to a perfect difference set.
\end{vtheorem}
\begin{proof}
Construct the projective plane from Lemma~\ref{projective_plane}, and fix the polarity from Lemma~\ref{polarity}.

By Lemma~\ref{absolute_n+1}, we see that $-4$, $-3$, $0$, and $2$ are absolute points.
Of these, $-4$, $-3$, and $0$ lie on the line $B-4$.
However, $2$ does not lie on the line $B-4$ because otherwise $6\in B$ and the two equal differences $6-0=0-(-6)$ would violate the defining property of a perfect difference set.

This causes a problem for both odd and even~$q$.
If $q$~were odd, then too many absolute points lie on the same line, violating Proposition~\ref{odd2}.
If $q$~were even, then not all absolute points lie on the same line, violating Proposition~\ref{even2}.
\end{proof}

\begin{vcorollary} 
\label{139AD}
$\{1,3,9,10,13\}$ is a Sidon set that does not extend to a perfect difference set.
\end{vcorollary}
\begin{proof}
If $B$~is a perfect difference set and $c$ is an arbitrary constant, then $B+c$~is also a perfect difference set.
(The $c$s cancel when computing differences in~$B$.)

Therefore, $\{-8,-6,0,1,4\}$ extends to a perfect difference set if and only if $9+\{-8,-6,0,1,4\}=\{1,3,9,10,13\}$ does as well.
Hence we are done by Theorem~\ref{86014}.
\end{proof}

\section{Lean statement} \label{section:lean}

We used ChatGPT to vibe code\footnote{Vibe coding~\cite{wiki:vibe} refers to a style of programming where the user interacts with a large language model in order to generate code, which they do not verify except via the use of tools.
The term was popularized by Andrej Karpathy~\cite{Karpathy} in February 2025.
(The reference there also describes using voice interaction, which we did not use and does not seem to be an essential part of the accepted usage of the term now.)
As we discuss in Section~\ref{section:ai}, Lean is an ideal use case for vibe coding.} a proof.
The resulting proof consists of thousands of lines of spaghetti code, with many missteps and convoluted arguments.
Normally for programming code, this would be reason to distrust the code; however, Lean is a proof assistant that formally verifies arguments, and so we can be sure that the argument is correct even if its code is ugly.
As the Lean website~\cite{Lean:website} says:
``Lean’s minimal trusted kernel guarantees absolute correctness in mathematical proof, software and hardware verification.''
Even so, there is still one potential source of error: the statement that Lean verifies may not correspond properly to the statement that the human mathematician believes is being proven.

The Formal Conjectures~\cite{FormalConj} project is an initiative by Google DeepMind to create a large, open corpus of formalized statements of open conjectures (into Lean).
In particular, they are attempting to translate all of \Erdos{}'s problems, and we were lucky that this particular \Erdos{} problem had already been translated~\cite{FC707}.
Several variants were included.
The principal one included the condition that $v=q^2+q+1$ and $q$ is a prime power.
We were interested in proving the conjecture with no restriction on the modulus~$v$, which was included as well.
However, the corresponding statement in Lean was in fact incorrect for a subtle reason!
The statement in Lean conjectured that every finite Sidon set can be extended to a perfect difference set modulo~$v$, but it accidentally allowed $v=0$.
The case~$v=0$ means that $\bbZ/v\bbZ$ is simply $\bbZ$~itself, so this corresponds to the infinite case handled in Claim~\ref{claim:v=0}.
In particular, this case is both relatively easy and has the opposite resolution.

While our Lean \emph{proof} was generated entirely by ChatGPT, we did carefully check that our statement matched what we believed we were proving.
As an extra precaution, we proved several ``consistency checks'' that were not needed for the main proof, but would support the claim that the statement of the conjecture was translated correctly.
Specifically, this included Observations~\ref{obs:plusminus} and~\ref{obs:pds_sidon}.

The Lean proof itself is included as a supplementary file, but for the avoidance for doubt, we include a portion of the file here so that one can see the \emph{statement} being proven:

\begin{verbatim}
/- This proof has been verified on Lean Toolchain
version leanprover/lean4:v4.24.0 and Mathlib version
f897ebcf72cd16f89ab4577d0c826cd14afaafc7 (v4.24.0) -/

import Mathlib

/-- A Sidon set `A` is a set where all pairwise sums
`i + j` are unique, up to swapping the addends. -/
def IsSidon {α : Type*} [AddCommMonoid α] (A : Set α) : Prop :=
  ∀ ⦃i₁ i₂ j₁ j₂ : α⦄, i₁ ∈ A → i₂ ∈ A → j₁ ∈ A →
    j₂ ∈ A → i₁ + i₂ = j₁ + j₂ →
    (i₁ = j₁ ∧ i₂ = j₂) ∨ (i₁ = j₂ ∧ i₂ = j₁)

/-- `B` is a perfect difference set modulo `v` if there
is a bijection between non-zero residues mod `v` and
 distinct differences `a - b`, where `a, b ∈ B`. -/
def IsPerfectDifferenceSetModulo (B : Set ℤ) (v : ℕ) : Prop :=
  B.offDiag.BijOn (fun (a, b) => (a - b : ZMod v))
  {x : ZMod v | x ≠ 0}

-- (6216 lines omitted)

/--
**Erd\H{o}s problem 707**:
Any finite Sidon set of natural numbers can be embedded
in a perfect difference set modulo `v` for some `v ≠ 0`.
-/
def erdos_707 : Prop :=
  ∀ A : Set ℕ, A.Finite → IsSidon A →
    ∃ (B : Set ℤ) (v : ℕ),
      v ≠ 0 ∧
      (↑) '' A ⊆ B ∧
      IsPerfectDifferenceSetModulo B v

-- (101 lines omitted)

/--
The Sidon set {1, 2, 4, 8, 13} does not extend to a
perfect difference set modulo v for any nonnegative v.
-/
theorem not_erdos_707AM : ¬ erdos_707 :=
  not_erdos_707_given_counterexample
    counterexampleAM
    counterexampleAM_finite
    counterexampleAM_Sidon
    counterexampleAM_noExt

#print axioms not_erdos_707AM
-- 'not_erdos_707AM' depends on axioms:
-- [propext, Classical.choice, Quot.sound]
\end{verbatim}

We can verify that the statement of \Erdos{}'s conjecture indeed says that any finite Sidon set can be extended to a perfect difference set.
Because this Lean file compiles without any issues, it verifies that the theorem \verb`not_erdos_707AM` indeed has the type \verb`¬ erdos_707`.
This means we have proven the negation of \Erdos{}'s conjecture.
(One cannot ascertain from the snippet provided above specifically which counterexample is verified, as it is only mentioned in a comment.)

Our resulting proof is over 6000 lines (over a quarter of a megabyte) and consists of 26~definitions, 169~lemmas, and 4~theorems (the final verification of counterexamples).
On our ordinary laptop, the code takes slightly under half a minute to verify.\footnote{The laptop is ordinary, but we had to buy a new one in order to use all of the technology required by this project.}
These metrics provide some data in the discussion of the ``de~Bruijn factor''~\cite{Wied06}, which is defined as the ratio of the size of a formal proof over the size of the informal proof.
We will abstain from computing a specific number to represent this, but we wanted to note that unlike in de~Bruijn's original work, we found that the factor varied heavily depending on the details of the argument.
If an argument was straightforward and involved only some projective planes and polarities, then usually the factor was tiny.
If, however, an argument involved cardinalities and parities, it would explode.

As discussed more in the following section, the most egregious example was a basic claim about involutions that took us over $250$~lines of code, but which comprises a single sentence in this paper.

\section{Discussion of the uses of artificial intelligence} \label{section:ai}

Modern large language models, such as ChatGPT from OpenAI, Claude from Anthropic, and Gemini from Google DeepMind, have proven to be very useful in many aspects of mathematical research.

One popular use case of LLMs is literature search.
In particular, there have been several recent success stories on \url{erdosproblems.com} with using LLMs to locate solutions to \Erdos{} problems in the existing literature; see Tao~\cite{Tao:mathstodon} for a discussion.
Unfortunately, LLMs completely failed to locate Hall's solution to our \Erdos{} problem.
Why?
There seems to be a confluence of issues here:
\begin{itemize}
\item[1.]
Hall didn't state his result as a theorem, but rather as a throwaway sentence after a theorem.
(Of course, he couldn't predict that \Erdos{} would later make a huge fuss out of this problem, so he didn't know that a theorem statement would be necessary to signal the result's existence.)
\item[2.]
No human mathematician appeared to have noticed Hall's result.
As an egregious example, Guy~\cite{Guy04} cites Hall's paper two sentences before stating this \Erdos{} problem in section \textbf{C10}.
Despite being aware of (some of) the contents of Hall's paper, Guy failed to connect the dots.
As another perspective, it seems that the folks who solve \Erdos{} problems are generally good at analytic number theory or analytic aspects of combinatorics, but they are perhaps less familiar with the study of combinatorial designs.
\item[3.]
One might expect AI to help close this gap by reading the literature and responding to appropriately engineered prompts, but alas, it appears that the contents of Hall's paper might have been blocked from the AI's training set thanks to a paywall.
As evidence of this, we searched a verbatim passage from the first page of Hall's paper and got a Google hit, but a similar search using text from later pages fails.
In particular, we were very lucky that Hall's comments about Conjecture~\ref{conjecture:desargues} that we quote in Section~\ref{section:plane} appeared on the first page!
\end{itemize}

In various projects, LLMs are very helpful in writing exploratory code.
This is almost an ideal use case.
We began our project by generating many perfect difference sets to see which Sidon sets were evidently forbidden.
Accordingly, we could have used LLMs to help with this, but instead, the first author found it mildly amusing to discover and use Construction~\ref{construction}.
Through these investigations, we found that $\{1,2,4,8\}$ appeared to be forbidden for primes (and odd prime powers), and $\{1,2,4,8,13\}$ was forbidden even for powers of two.
These observations ultimately led to Theorems~\ref{1248} and~\ref{theorem:main}.

Sometimes, LLMs are helpful with brainstorming ideas, suggesting techniques, or even proving theorems.
In fact, various authors have reported LLMs identifying a crucial idea for a proof, supplying a lemma, or otherwise making it possible to complete a project that they were stuck on.
Sadly, for our problem, LLMs were not terribly helpful in this way.
Even after we knew what exactly to prove, it couldn't help us close the gap.

At some point during the course of this project, we eventually found Hall's paper and thus learned that the problem had been solved.
This was very confusing for us.
Was Hall's proof correct, or was it generally understood by the community to be flawed in some way?
It was clear to us that Lean would help us determine the truth, but we didn't know Lean, and it isn't terribly user-friendly.
However, ChatGPT can write Lean, so we decided to vibe code the whole proof.
It took a long time (about a week\footnote{We do not have a precise estimate of the time required, but ``a week'' is in the sense of ``working full-time, with substantial overtime''.  With respect to ordinary calendar time, it took a month.}), and the process was extremely annoying, but somehow it succeeded.

One thing we came to realize is that Lean is actually a perfect setting for vibe coding.
In general, vibe coding is good for generating code that appears to work well, but might contain some bugs.
As such, one must carefully test the resulting code before deploying it in important settings.
Modern applications of vibe coding include quick prototypes or fun side projects, i.e., settings in which bugs are not catastrophic.
It turns out that Lean is also a great match for vibe coding, but for a completely different reason: if the code runs, you can trust it!

People think of Lean proofs as \textit{computer-assisted} proofs, but this was entirely backwards from our experience.
What we experienced was much more like a \textit{human-assisted} (formal) proof.
An idealized version of our experience would look like this:
\begin{center}
\begin{tikzpicture}[
  node distance=1cm,
  box/.style={
    draw,
    rounded corners,
    minimum width=3cm,
    minimum height=1cm,
    align=center,
    thick
  },
  arrow/.style={
    <->,
    thick,
    >=stealth,
    shorten >=4pt,
    shorten <=4pt
  }
]
\node[box] (human) {human mathematician};
\node[box, right=of human] (llm) {LLM interface};
\node[box, right=of llm] (backend) {formalization backend};
\draw[arrow] (human.east) -- (llm.west);
\draw[arrow] (llm.east) -- (backend.west);
\end{tikzpicture}
\end{center}
In particular, the human mathematician engages in a productive conversation with an LLM, discussing a mathematical argument in natural language.
The LLM then translates and extrapolates the core ideas in Lean code, fighting with syntax and compile errors on its own, keeping the human away from this level of tedium.
When the LLM gets stuck on a fundamental logical step, it asks an appropriate followup question to the human mathematician.
After a few iterations, the LLM successfully formalizes the human mathematician's proof.

From this perspective, our very first interaction with ChatGPT was stunning.
We asked for it to state Proposition~\ref{absolute1} in Lean (meaning to state the proposition without proof), and it quickly replied not only with the statement but also with a correct proof.
In particular, we did not yet supply it with any argument, which we presumed would be necessary.

Sadly, most of the rest of our actual experience with human-assisted proof wasn't nearly this pleasant (reflective of ChatGPT's so-called ``jagged intelligence''), though we are hopeful for a future that is closer to the above idealization.
One of the early frustrations was while proving the next result, Proposition~\ref{not_absolute}.
It was promising at first, as ChatGPT itself constructed and proved the properties of the involution $p\mapsto p'$ without any hints at all.
However, it was completely stuck finishing the final step: that since the non-absolute points occur in pairs, there is an even number of them.

This is a curious inversion of the experience of describing the proof of Proposition~\ref{not_absolute} to another human mathematician:
The interesting and possibly ``difficult'' part to a human is the definition of~$p'$ from~$p$.
It may also be interesting to verify the desired properties of this map, but the final step (that if a finite number of items occur in pairs, then there are an even number of them) is so simple that depending on the verbosity of the writer, it may occupy only one or even no sentences in a writeup.

For us, however, convincing ChatGPT to formally prove the result that ``if $f$~is a fixed point--free involution on a finite set~$S$, then $S$~has even cardinality'' was a multi-day struggle!
Furthermore, the resulting argument is 250 lines of code, many of which deal with entirely trivial claims.
(This is not an inherent feature of Lean; in fact, almost certainly a succinct and nice proof exists and can be written by a competent human.
However, it was the best we could do vibe coding with our particular model.)

There were multiple recurring sources of difficulty with Lean and ChatGPT during the vibe coding process.
One was the multitude of notions of \emph{cardinality} in Mathlib, as applied to sets, finite sets, subtypes, etc.
Perhaps they all make sense when viewed appropriately, but ChatGPT was not helpful in translating between all of them.
Routinely we would seek to prove a basic lemma involving cardinality, only to be stymied because there were issues involving which specific cardinality was known or sought.

Another, very simple but frequent issue that arose involved parity.
Perhaps the ChatGPT model we used was trained heavily on mathlib3, the previous version of Mathlib, which has different definitions of basic concepts like even/odd and lemmas involving them.
We were often unsuccessful with prompting ChatGPT to look up the appropriate definitions and use them.
Thankfully this usually did not delay the work very much, as it's a very simple matter.
In general, it can be frustrating when different versions of a programming language are incompatible and the large language model of choice was trained on the ``wrong’’ version, relative to the use case.

Many of the issues we encountered would be alleviated greatly if the large language model interface we interacted with were integrated with Lean and appropriately trained for this interaction.
We believe that even a small amount of such fine tuning would have made our specific ``human-assisted proof'' a much more pleasant interaction.

Finally, when it came to writing the paper, we knew that ChatGPT would probably do a decent job, but we decided to do our part and write it the old-fashioned way.

\section{Future directions} \label{section:future}

In this paper, we identified forbidden Sidon sets of finite perfect difference sets.
There are a few opportunities for future work.

First, let $s$ denote the size of the smallest forbidden Sidon set.
Both our example and Hall's example establish that $s\leq 5$.
Meanwhile, the lower bound $s\geq 3$ follows from the fact that there are arbitrarily large finite perfect difference sets.
What is $s$?
Also, what are the forbidden Sidon sets of size $s$?
One might interpret this as a challenge to ``beat the AI'', \`{a}~la~\cite{Tao:MO}.

Can one find other \Erdos{} problems that were solved before they were posed?
Can AI help solve either of the two (ostensibly) open \Erdos{} problems that are worth more than \$1000?
Finally, how long until AI gets to the point where human-assisted proof is easier than conventional mathematics research?

\end{document}